\newtheorem{theorem}{Theorem}[section]
\newtheorem{lemma}[theorem]{Lemma} 
\newtheorem{corollary}[theorem]{Corollary}
\newtheorem{proposition}[theorem]{Proposition}
\newtheorem{definition}[theorem]{Definition}
\newtheorem{assumption}{Assumption}
\newcommand{\E}{{\mathbb E}}
\newcommand{\eps}{\varepsilon}
\definecolor{RED}{rgb}{1,0,0}\definecolor{BLUE}{rgb}{0,0,1} 
\newcommand{\Cov}{\mathrm{Cov}}
\newcommand{\Var}{\mathrm{Var}}
\newcommand{\Ent}{\mathrm{Ent}}
\newcommand{\bone}{\mathbbm{1}}
\begin{document}
\title{Influences in Mixing Measures}
\author{Frederic Koehler\footnote{\texttt{fkoehler@stanford.edu}, Stanford University.} \and Noam Lifshitz\footnote{\texttt{noam.lifshitz@mail.huji.ac.il}, Hebrew University.} \and Dor Minzer\footnote{\texttt{dminzer@mit.edu}, MIT.} \and Elchanan Mossel\footnote{\texttt{elmos@mit,edu}, MIT.}}

\maketitle

\begin{abstract}
The theory of influences in product measures has profound applications in
theoretical computer science, combinatorics, and discrete probability.
This deep theory is intimately connected to functional inequalities and to the Fourier analysis of discrete groups.
Originally, influences of functions were motivated by the study of social choice theory, wherein a Boolean function represents a voting scheme, its inputs represent the votes,
and its output represents the outcome of the elections. Thus, product measures represent a scenario in which the votes of the parties are randomly and independently
distributed, which is often far from the truth in real-life scenarios.

We begin to develop the theory of influences for more general measures under mixing or correlation decay conditions.
More specifically, we prove analogues of the KKL and Talagrand influence theorems for Markov Random Fields on bounded degree graphs with correlation decay.
We show how some of the original applications of the theory of in terms of voting and coalitions extend to general measures with correlation decay.
Our results thus shed light both on voting with correlated voters and on the behavior of general functions of Markov Random Fields (also called ``spin-systems") with correlation decay.
\end{abstract}
\newpage


\section{Introduction}
 Starting with the works of Ben-Or and Linial~\cite{BenorLinial:90} and Kahn, Kalai, and Linial~\cite{KaKaLi:88},
 Analysis of Boolean functions became a major area of research in combinatorics, probability and theoretical computer science.
 It has deep and interesting connections to functional and isoperimetric inequalities, and other important areas in probability and combinatorics. It has deep impact in property testing, hardness of approximation, the theory of voting and the theory of percolation, see e.g.~\cite{ODonnell:14,GarbanSteif:14,Mossel:22}.

 At the technical level this theory crucially relies on:
 \begin{itemize}
 \item Hyper-contractive inequalities that hold for product measures that are not too biased, and
 \item Explicit representations of functions in explicit bases, which correspond to Fourier bases and their generalizations.
 \end{itemize}

 Major recent effort has been devoted to extend the theory to space for which hyper-contractive inequalities do not hold.
 Notably it was shown that a notion of {\em global hypercontraction} holds for such spaces and that this in turn implies many interesting applications~\cite{khot2023pseudorandom,keevash2021global,KLMan,gur2022hypercontractivity,bafna2022hypercontractivity,MZ}.
 In the other direction, extending the theory to spaces that are not highly symmetric and do not have explicit bases remained a major challenge.

Our main contribution in this paper is to prove very general versions of two major theorems of analysis of Boolean functions, the KKL and the Talagrand theorem in the setting of general Gibbs measures on bounded degree graphs with correlation decay.
 The study of such measures is fundamental in statistical physics, graphical models, and in the analysis of Markov chains and spectral independence, see e.g.~\cite{mossel2009rapid,mossel2009hardness,efthymiou2019convergence,anari2021spectral,marton2019logarithmic,stroock1992logarithmic,chen2021optimal,montenegro2006mathematical,DobrushinShlosman:85}. 
 Such measures are known to satisfy the log-Sobolev inequality (equivalently they are hyper-contractive) but do not posses explicit orthogonal bases.

We show how some of the original applications of the theory of influences extend to the new setup: for general voting functions on $n$ voters there exist a voter who influence is $\Omega(\log n / n)$ times the variance. For monotone voting functions there exist a coalition of $O(n / \log n)$ voters who by flipping their votes can control the elections with probability arbitrary close to $1$.




%
%
%
%
%

\section{Definitions and Main Results}
We recall the definition of the Glauber dynamics, log-Sobolev constant, etc. See e.g., \cite{bakry2014analysis,montenegro2006mathematical,van2014probability} for references.

\paragraph{Glauber dynamics.}
Let $\nu$ be a probability distribution on the space $\Sigma^n$ where $\Sigma$ is an arbitrary finite set.
Let $P_i$ be the Markov operator that resamples coordinate $i$ from stationary distribution $\nu$ conditioned on all other coordinates, so that
\[ (P_i f)(x) = \E_{\nu}[f(X) \mid X_{- i} = x_{- i} ],
\]
where $x_{-i}$ is the vector of all coordinates other than $i$.
We will consider the continuous time Glauber dynamics, where a coordinate $i$ are picked according to independent Poisson clocks and are then the coordinate is updated according to $P_i$.
It is well known that this defines a semigroup $H_t$ where $H_t$ is the transition matrix of the configuration from time $0$ to time $t$. We recall that $H$ being a {\em semigroup} means that it satisfies that $H_{s+t} = H_s H_t = H_t H_s$ for all $s$ and $t$.
Moreover, we can write $H_t = e^{t L}$,
where $L$, called the {\em generator}, is given by $L = \sum_i L_i$ and $L_i f = P_i f - f$ so that $L_i^2 = (P_i - I)^2 = -P_i + I = -L_i$.   With this notation, the {\em Dirichlet form} of the Glauber dynamics is defined to be
\[ \mathcal E_{\nu}(f,f) = -\mathbb E_{X \sim \nu}[ f(X) (L f)(X)] = \sum_i \E_{\nu} (L_i f)^2. \]
Each $L_i$ can be thought of as a generalized notion of partial derivative with respect to coordinate $i$, so the Dirichlet form can be viewed as a natural measure of the size of the gradient of the function $f$ (from the perspective of the chosen semigroup).

\paragraph{Log-Sobolev inequality.} We say the Glauber dynamics for $\nu$ satisfy the log-Sobolev inequality with constant $\rho > 0$ if
\[ \rho\, \Ent_{\nu}[f] \le 2 \mathcal E_{\nu}(\sqrt{f}, \sqrt{f}) \]
for all functions $f : \Sigma^n \to \mathbb{R}_{\ge 0}$, where $\Ent_{\nu}[f] = \E_{\nu}[f \log f] - \E_{\nu}[f] \log \E_{\nu}[f]$ is the relative
entropy functional. This is equivalent to the hypercontractivity statement that for all functions $f$, $t \ge 0$, and $p \ge 1 + e^{-2\rho t}$,
\[ \|H_t f\|_{2} \le \|f\|_p \]
where $\|\cdot\|_p$ denotes the $L_p(\nu)$ norm $\|f\|_p = (\E_{\nu} |f|^p)^{1/p}$.

The log-Sobolev inequality implies that the Poincar\'e inequality
\[ \lambda \Var_{\nu}(f) \le \mathcal E_{\nu}(f,f) \]
holds with some constant $\lambda \ge \nu$ and for all functions $f : \Sigma^n \to \mathbb R$. This is
equivalent to the statement that $\Var(H_t f) \le e^{-\lambda t} \Var(f)$ for all such $f$.

\paragraph{Markov property.} We say $\nu$ is a \emph{Markov random field} with respect to a graph $G$
if it satisfies the \emph{Markov property}: for any vertex $i$ with neighbors $\mathcal N(i)$ in $G$ and for $X \sim \nu$, $X_i$ is conditionally
independent of $X_{\sim i}$ given $X_{\mathcal N(i)}$. Such a distribution is also referred to as an \emph{undirected graphical model}, see \cite{lauritzen1996graphical}.
Given a graph $G$, we let $d_G(i,j)$ denote the graph distance
between $i$ and $j$.

\paragraph{Other notation.} Given square matrices $X,Y$ we write $[X,Y] = XY - YX$ for the usual commutator. We write $[X, \cdot]$ to denote the adjoint map $Y \mapsto [X,Y]$. We now come to the important definition of influences for our setting.
\begin{definition} \label{def:inf}
Given a function $f : \Sigma^n \to \{0,1\}$, we define the influence of coordinate $i$ to be
\[ I_i(f) = \Pr_{X \sim \nu}[\exists x'_i, f(X) \ne f(X_1,\ldots,X_{i - 1}, x'_i, X_{i + 1}, \ldots, X_n)]. \]
\end{definition}
We write
$d_H(x,y) = \#\{i : x_i \ne y_i\}$ to denote the usual Hamming metric on $\Sigma^n$.
Given a vector $x \in \Sigma^n$ and $i \in [n]$, $x_{\sim i} \in \Sigma^{n - 1}$ denotes the same vector with coordinate $i$ removed.

\subsection{Main Results}
Our results hold in a very general setting: they apply to all undirected graphical models with
bounded marginals, bounded degree, and which satisfy the log-Sobolev inequality. These assumptions
are formally laid out below. In Section~\ref{sec:example}, we illustrate some of the special cases
where the log-Sobolev inequality is known to hold and give references to others.
\begin{assumption}\label{assumptions}
The probability measure $\nu$ on $\Sigma^n$ for some $n \ge 1$ satisfies that:
\begin{enumerate}
\item There exists a constant $b \ge 1$ such that 
\begin{equation} \label{eqn:mu}
\nu(x)/\nu(y) \in [1/b,b]
\end{equation}
for any $x,y \in \Sigma^n$ with Hamming distance one. In other words, $\nu$ has bounded marginals under pinning.
\item The Glauber dynamics for $\nu$ satisfy the log-Sobolev inequality with constant $\rho \in (0,1]$.
\item The distribution $\nu$ is a Markov random field with respect to a graph $G$ of maximum degree $\Delta$.
\end{enumerate}
\end{assumption}
In our key contribution, we show that these assumptions suffice to prove general versions of Talagrand's theorem and the KKL inequality:
\begin{theorem}[Theorem~\ref{thm:talagrand} below]\label{thm:talagrand-main}
For any $n \ge 1$, $\nu$ satisfying Assumption~\ref{assumptions}, and any $f : \Sigma^n \to \mathbb R$, we have
\begin{equation} \label{eq:main}
\Var_{\nu}(f) \le \frac{Cq^4 b^4 \Delta^2}{\rho} \sum_j \frac{\|L_j f\|_2^2}{1 + \log(\|L_j f\|_2/\|L_j f\|_1)}
\end{equation}
for some absolute constant $C > 0$.
\end{theorem}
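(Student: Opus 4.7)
The plan is to adapt the classical semigroup proof of Talagrand's inequality to the Markov random field setting, absorbing the non-commutativity between the single-site operators $L_i$ and the Glauber semigroup $H_t$ using the Markov property and the bounded-marginals assumption. The starting point is the spectral identity
\[ \Var_\nu(f) = 2 \int_0^\infty \mathcal{E}_\nu(H_t f, H_t f)\, dt = 2 \sum_i \int_0^\infty \|L_i H_t f\|_2^2\, dt, \]
obtained by differentiating $\|H_t f\|_2^2$ in $t$ and using $H_\infty f = \E_\nu f$. In the product-measure case, $L_i$ commutes with $H_t$, so $\|L_i H_t f\|_2 = \|H_t L_i f\|_2 \le \|L_i f\|_{p(t)}$ with $p(t) = 1 + e^{-2\rho t}$ by the hypercontractive form of the log-Sobolev inequality; the interpolation $\|L_i f\|_{p(t)} \le \|L_i f\|_1^{\alpha(t)} \|L_i f\|_2^{1 - \alpha(t)}$ followed by integration in $t$ yields the logarithmic improvement on the right-hand side of~\eqref{eq:main}. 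This classical argument is my template; the real work lies in reproducing it under the non-commutativity induced by the MRF structure.

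The key substitute for commutativity is the Duhamel formula
\[ L_i H_t f = H_t L_i f + \int_0^t H_{t-s}\, [L_i, L]\, H_s f\, ds, \]
combined with the observation that the Markov property forces $[L_i, L_k] = 0$ whenever $i$ and $k$ are non-adjacent in $G$. Indeed, for $i \not\sim k$ the path-separation property implies $X_i \perp X_k \mid X_{-i,-k}$, so the joint conditional factorizes and both $P_i P_k$ and $P_k P_i$ reduce to the product resampling from $\nu(x_i' \mid x_{-i,-k})\,\nu(x_k' \mid x_{-i,-k})$. Consequently $[L_i, L] = \sum_{k \sim i} [L_i, L_k]$ is supported on the at-most-$\Delta$ neighbors of $i$, and each summand is an edge-local operator whose size is controlled by the interaction at $\{i, k\}$. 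The bounded-marginals assumption translates this into an operator estimate of the form $\|[L_i, L_k] g\|_2 \le c(b)\,(\|L_i g\|_2 + \|L_k g\|_2)$ with $c(b)$ polynomial in $b$.

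Substituting the Duhamel bound back into the spectral identity, the main piece $\|H_t L_i f\|_2$ is treated exactly as in the classical proof and contributes the desired sum $\sum_j \|L_j f\|_2^2 / (1 + \log(\|L_j f\|_2/\|L_j f\|_1))$; the commutator correction is absorbed by applying the same hypercontractivity-plus-interpolation machinery to each neighbor term $\|L_k H_s f\|_2$ appearing inside the Duhamel integral, and the overall prefactor involving $q^4 b^4 \Delta^2/\rho$ emerges from summing over the $\Delta$ neighbors, squaring, and invoking the commutator and hypercontractivity estimates a bounded number of times.

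The main obstacle, and the most delicate step of the argument, is ensuring that the commutator expansion does not snowball under iteration. A naive substitution of the Duhamel bound inside itself produces factors growing exponentially in $t$, which would destroy the time-integration. I would control this either by truncating the time integral at a scale where the Poincar\'e contraction of $H_t$ takes over for the tail and makes the iterative step harmless, or by arguing that each commutator invocation is dominated by a \emph{strictly smaller} quantity than the one we started with, so that nested applications form a convergent geometric series in an expression involving $\Delta$, $b$, and $1/\rho$ rather than a divergent one.
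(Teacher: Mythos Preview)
Your outline assembles the right ingredients --- Duhamel/commutator expansion, locality of $[L_i,L]$ via the Markov property, hypercontractivity, and a finite-time truncation --- and this is indeed the paper's route. But the order in which you propose to combine them contains a genuine gap.

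You suggest handling the Duhamel remainder by ``applying the same hypercontractivity-plus-interpolation machinery to each neighbor term $\|L_k H_s f\|_2$''. If you apply hypercontractivity to the factor $H_{t-s}$ at each Duhamel step and then iterate on $\|L_k H_s f\|_2$, you only obtain a Gr\"onwall-type bound of the form $\sum_i \|L_i H_t f\|_2^2 \le e^{C\Delta^2 b^2 q^2\, t}\sum_i \|L_i f\|_2^2$, which carries no $\|L_j f\|_{p(t)}$ gain and therefore cannot produce the logarithmic denominator. The paper avoids this by iterating Duhamel \emph{to all orders first}, i.e.\ using the Baker--Hausdorff identity $H_t^{-1}L_iH_t = \sum_{k\ge 0}\frac{t^k}{k!}[L_i,L]^{(k)}$, so that $L_i H_t f = H_t M_{t,i} f$ with $M_{t,i}$ a series of iterated commutators acting directly on $f$ (no residual semigroup). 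Only then is hypercontractivity applied, once, giving $\|L_i H_t f\|_2 \le \|M_{t,i} f\|_{1+e^{-2\rho t}}$, and each iterated commutator is bounded in this $L_p$ norm by $\max_j \|L_j f\|_p$ using your edge-local estimate and the bounded-marginals assumption.

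Regarding your two options for controlling the iteration: option (b) is false --- the $k$-fold commutators do not shrink, they grow combinatorially like $(\Delta k\, qb)^{O(k)}$. What makes the series converge is the prefactor $t^k/k!$ together with choosing $T\le c/(q^2b^2\Delta^2)$ small; this is the actual content of option (a). The role of the Poincar\'e inequality is not to tame the iteration but simply to close the argument via $\Var(f)\le (1-e^{-\rho T})^{-1}\int_0^T \mathcal E(H_t f,H_t f)\,dt$, which is where the factor $1/(1-e^{-\rho T})\asymp q^2b^2\Delta^2/\rho$ in the final constant comes from.
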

\begin{theorem}[Theorem~\ref{thm:kkl} below]\label{thm:kkl-main}
There exists $\alpha_{b,\rho,\Delta,q} > 0$ such that the following is true.
For any $n \ge 1$, $\nu$ satisfying Assumption~\ref{assumptions}, and any $f : \Sigma^n \to \{0,1\}$,
there exists a coordinate $k \in [n]$ such that
\[ I_k(f) \ge \alpha_{b,\rho,\Delta,q} \Var(f)\log(n)/n. \]
\end{theorem}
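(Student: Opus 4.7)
The plan is to deduce Theorem~\ref{thm:kkl-main} from the Talagrand-type bound of Theorem~\ref{thm:talagrand-main} by translating the $L_j f$ norms into influences and then running the standard Talagrand-to-KKL argument.

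First, I would compute both $L_j$-norms for Boolean $f$. Setting $p_j(y) := \Pr_\nu[f(X)=1 \mid X_{-j}=y]$ and conditioning on $X_{-j}$, the variable $f(X)$ is Bernoulli with parameter $p_j(X_{-j})$ while $L_j f(X) = p_j(X_{-j}) - f(X)$. Using the identities $\Var(\mathrm{Bern}(p)) = p(1-p)$ and $\E|\mathrm{Bern}(p) - p| = 2p(1-p)$ gives
\[
\|L_j f\|_2^2 = \E[p_j(1-p_j)], \qquad \|L_j f\|_1 = 2\,\E[p_j(1-p_j)],
\]
so the ratio collapses to $\|L_j f\|_2 / \|L_j f\|_1 = 1/(2\|L_j f\|_2)$. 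Coordinate $j$ is pivotal at $X$ iff $p_j(X_{-j}) \in (0,1)$, so $I_j(f) = \Pr[p_j \in (0,1)]$; combined with $p(1-p) \leq 1/4$ this yields the key inequality $\|L_j f\|_2^2 \leq I_j(f)/4$, or equivalently $\log(\|L_j f\|_2/\|L_j f\|_1) \geq \tfrac{1}{2}\log(1/I_j(f))$.

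Substituting these two bounds into Theorem~\ref{thm:talagrand-main} (upper bound in the numerator, lower bound in the denominator) yields the classical-looking estimate
\[
\Var_\nu(f) \leq \frac{C' q^4 b^4 \Delta^2}{\rho} \sum_{j=1}^{n} \frac{I_j(f)}{1 + \log(1/I_j(f))}
\]
for some absolute $C' > 0$. Setting $\tau := \max_j I_j(f)$ and using that $x \mapsto x/(1 + \log(1/x))$ is increasing on $(0,1]$, the right-hand side is at most $C' q^4 b^4 \Delta^2 n \tau /\bigl(\rho(1+\log(1/\tau))\bigr)$. A short case split finishes: if $\tau \geq \log(n)/n$ the conclusion is immediate, and otherwise $\log(1/\tau) \geq \tfrac{1}{2}\log n$ for $n$ sufficiently large, giving $\tau \geq \alpha_{b,\rho,\Delta,q} \Var_\nu(f) \log(n)/n$. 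The Poincaré inequality (which follows from log-Sobolev) rules out any degenerate regime where $\tau \ll 1/n$, and small $n$ can be absorbed into the constant $\alpha_{b,\rho,\Delta,q}$.

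Once Theorem~\ref{thm:talagrand-main} is available, the argument is essentially mechanical and I do not foresee a real obstacle. The only delicate point is the identity $\|L_j f\|_1 = 2\|L_j f\|_2^2$, which is special to Boolean-valued $f$ and is what converts the implicit log-ratio in Talagrand's inequality into $\log(1/I_j(f))$; all the structural hypotheses (bounded marginals, Markov property, log-Sobolev) enter only through the constant coming from Theorem~\ref{thm:talagrand-main}.
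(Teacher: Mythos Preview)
Your proposal is correct and follows the same high-level route as the paper: apply the generalized Talagrand inequality (Theorem~\ref{thm:talagrand-main}), convert the $\|L_jf\|_p$ norms into influences, and finish with the standard Talagrand-to-KKL argument.

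The one notable difference is in the conversion step. The paper bounds $\|L_jf\|_1$ and $\|L_jf\|_2$ separately via the two-sided estimate $I_j(f)\ge \E|L_jf|^p\ge (qb)^{-p}I_j(f)$ (Lemma~\ref{lem:influence-derivative}), which uses the bounded-marginal constant~$b$ and produces a denominator $1-\log(bq\sqrt{I_j(f)})$. You instead exploit the exact Bernoulli identities $\|L_jf\|_2^2=\E[p_j(1-p_j)]$ and $\|L_jf\|_1=2\|L_jf\|_2^2$, which collapse the log-ratio to $\tfrac12\log(1/I_j(f))$ without any appeal to bounded marginals at this step. This is slightly cleaner (no spurious $qb$ inside the logarithm), though it does not change the final result. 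The concluding arguments (your case split on $\tau=\max_j I_j(f)$ versus the paper's proof by contradiction) are equivalent standard manipulations; your remark about Poincar\'e ruling out $\tau\ll 1/n$ is harmless but not actually needed, since the case $\tau<\log(n)/n$ already yields the desired bound directly from the displayed inequality.
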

Both of these results are derived as consequences of a new comparison inequality between
the variance and derivatives of a function $f$ (Theorem~\ref{thm:technical} below).
Our results in (\ref{eq:main}) vastly generalize results of Cordero-Erasquin and Ledoux \cite{CorderoLedoux:12}.
In \cite{CorderoLedoux:12} a statement similar to (\ref{eq:main}) was proven under the assumption that the operators $L_i$ and semigroup $H_t$ `` weakly commute" (equation (15) there). This is valid
for product measures and a few other interesting examples in  \cite{CorderoLedoux:12} such as the symmetric group, the sphere etc.
However, in our setting it fails very badly --- an update at one site affects all of its neighbors, which affects their neighbors, and so on. In our proof we follow \cite{CorderoLedoux:12} in writing the variance as an ``integral over the heat semi-group" (equation
(\ref{eq:dervar}) below). Then, in our main contribution we provide a new analysis for this noncommutative setting which controls the commutators corresponding to all of these interactions.
\subsection{Applications to voting}
There is a long history of using Markov random fields/statistical physics models to model the correlated preferences of voters in elections, for example to estimate the probability of a Condorcet paradox (e.g. \cite{raffaelli2005statistical,columbu2008nature,galam1997rational,gehrlein2006condorcet,koehler2020phase}). Our results have a natural interpretation in the voting context.
If each entry $X \sim \nu$ corresponds to the preference of an individual, and $f : \Sigma^n \to \{0,1\}$ is an election rule which
takes as input these preferences and aggregates them into a choice between two candidates, then our generalized KKL theorem
says that one voter has influence $\Omega(\log(n)/n)$ provided both candidates have a non-negligible chance of winning \emph{a priori}.

What about larger coalitions? Before stating our result, it is natural in the context of elections to assume that voters preferences are also binary valued (i.e. $\Sigma = \{\pm 1\}$) and that the function $f$ is \emph{monotone}, i.e. if $x \le y$ then $f(x) \le f(y)$. Under these assumptions,
the following corollary shows in particular that a coalition of size $\omega(n/\log(n))$ has influence $1 - o(1)$ on a fair election. It follows by iteratively applying our generalization of the KKL theorem, and generalizes
Corollary 3.5 of \cite{KaKaLi:88} where the case of the uniform measure was considered.
\begin{corollary}[Corollary~\ref{corr:coalition} below]
For any $n \ge 1$ and $\nu$ satisfying Assumption~\ref{assumptions}, the following is true.
For any $\epsilon > 0$ and and monotone function $f : \{\pm 1\}^n \to \{0,1\}$ satisfying $\E_{\nu}[f] \ge \epsilon$, there exists a set of coordinates $S \subset [n]$ such that
\[ \E_{X \sim \nu}[f(X_{\sim S}, X_S \to 1)] \ge 1 - \epsilon \]
and
\[ |S| \le \frac{4(1 + b) \log(1/2\epsilon)}{\alpha_{b,\rho,\Delta}} \cdot \frac{n}{\log(n)} \]
where $\alpha_{b,\rho,\Delta} > 0$ is the constant (independent of $n$) from Theorem~\ref{thm:kkl}.
\end{corollary}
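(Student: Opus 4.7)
The plan is to iteratively apply the generalized KKL theorem (Theorem~\ref{thm:kkl-main}) directly to the original measure $\nu$, working with a sequence of auxiliary functions built from $f$. Set $S_0 = \emptyset$, and at step $j$ define $h_j : \Sigma^n \to \{0,1\}$ by $h_j(X) = f(X_{\sim S_j}, +1_{S_j})$ (i.e., overwrite the $S_j$-coordinates by $+1$). Each $h_j$ is monotone (as the composition of the monotone $f$ with a monotone projection) and $\E_\nu[h_j] =: \mu_j$ is exactly the quantity that the corollary aims to push above $1-\epsilon$. Moreover $I_k(h_j) = 0$ for every $k \in S_j$, so Theorem~\ref{thm:kkl-main} applied to $h_j$ under $\nu$ (which satisfies Assumption~\ref{assumptions} by hypothesis) yields a coordinate $k \in [n]\setminus S_j$ with
\[ I_k(h_j) \;\ge\; \alpha\,\Var_\nu(h_j)\,\log(n)/n, \qquad \alpha := \alpha_{b,\rho,\Delta}. \]

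\medskip

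Setting $S_{j+1} := S_j \cup \{k\}$, note that $h_{j+1}(X) = h_j(X_{\sim k}, +1)$. Decomposing $\mu_j$ over the value of $X_k$ and cancelling gives
\[ \mu_{j+1} - \mu_j \;=\; \E_\nu\!\left[\Pr[X_k = -1 \mid X_{\sim k}]\,\bigl(h_j(X_{\sim k}, +1) - h_j(X_{\sim k}, -1)\bigr)\right]. \]
By monotonicity the inner difference lies in $\{0,1\}$, being $1$ precisely on the pivotal event (whose probability equals $I_k(h_j)$). The bounded-marginals hypothesis forces $\Pr[X_k = -1 \mid X_{\sim k}] \ge 1/(1+b)$, so $\mu_{j+1} - \mu_j \ge I_k(h_j)/(1+b) \ge \alpha\,\mu_j(1-\mu_j)\log(n)/((1+b)n)$.

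\medskip

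To convert this multiplicative-in-variance progress into an $\epsilon$-free additive rate, apply the change of variable $\phi_j := \log(\mu_j/(1-\mu_j))$. The mean value theorem gives $\phi_{j+1} - \phi_j \ge (\mu_{j+1}-\mu_j)/(\mu_j(1-\mu_j)) \ge \alpha\log(n)/((1+b)n)$. Halt the process once $\mu_J \ge 1-\epsilon$, equivalently $\phi_J \ge \log((1-\epsilon)/\epsilon)$; starting from $\phi_0 \ge \log(\epsilon/(1-\epsilon))$, the required total increment is at most $2\log((1-\epsilon)/\epsilon)$. Summing the per-step gains yields $|S| \le 2(1+b)\log((1-\epsilon)/\epsilon)\,n/(\alpha\log n)$, and using $\log((1-\epsilon)/\epsilon) \le 2\log(1/(2\epsilon))$ for $\epsilon \le 1/4$ (while the range $\epsilon \in (1/4, 1/2)$ is handled in a bounded number of steps) yields the claimed bound $|S| \le \tfrac{4(1+b)\log(1/(2\epsilon))}{\alpha}\cdot\tfrac{n}{\log n}$.

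\medskip

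The principal conceptual observation is that the iteration can be carried out entirely under the original measure $\nu$ by modifying the function $f$ (rather than conditioning the measure on $X_S = +1$), which sidesteps the delicate question of whether Assumption~\ref{assumptions} is preserved under pinning. Beyond that, the argument is routine bookkeeping around the monotone single-step inequality and the logit change of variable; the main place to be careful is that monotonicity is genuinely needed to make the bounded-marginals factor of $1/(1+b)$ work in a single direction.
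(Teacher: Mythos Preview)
Your overall strategy is exactly the paper's: build $S$ greedily, at each step apply the generalized KKL theorem to the modified function $h_j(x)=f(x_{\sim S_j},+1_{S_j})$ under the \emph{original} measure $\nu$, and use the bounded-marginal assumption to convert the influence lower bound into the recursion
\[
\mu_{j+1}\ \ge\ \mu_j+\frac{\alpha}{1+b}\,\mu_j(1-\mu_j)\,\frac{\log n}{n}.
\]
Your remark that working with $h_j$ rather than conditioning avoids checking Assumption~\ref{assumptions} under pinning is also the point the paper is (implicitly) making.

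There is, however, a genuine slip in the bookkeeping. With $\phi(\mu)=\log(\mu/(1-\mu))$ one has $\phi'(\mu)=1/(\mu(1-\mu))$, which is \emph{minimized} at $\mu=1/2$. Hence the mean value theorem gives
\[
\phi_{j+1}-\phi_j=\frac{\mu_{j+1}-\mu_j}{\xi(1-\xi)}\quad\text{for some }\xi\in(\mu_j,\mu_{j+1}),
\]
and when $\mu_j<1/2$ one has $\xi(1-\xi)\ge \mu_j(1-\mu_j)$, so the inequality you wrote goes the wrong way; one does \emph{not} get $\phi_{j+1}-\phi_j\ge c$ with $c=\alpha\log n/((1+b)n)$ in general (a quick numerical check with $\mu_j=0.1$ confirms the increment is strictly below $c$). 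The paper sidesteps this by a two-phase argument: for $p_t<1/2$ it uses $p_{t+1}\ge p_t(1+c/2)\ge p_t\,e^{c/2}$, and for $p_t\ge 1/2$ the symmetric bound on $1-p_t$, each phase costing at most $\tfrac{2(1+b)}{\alpha}\tfrac{n}{\log n}\log(1/2\epsilon)$ steps. Your logit route can also be repaired (e.g.\ $\log(1+c(1-\mu))-\log(1-c\mu)\ge c-O(c^2)\ge c/2$ for $n$ large), but as written the step is incorrect. Relatedly, the throwaway ``$\epsilon\in(1/4,1/2)$ is handled in a bounded number of steps'' does not match the stated bound, since $\log(1/(2\epsilon))\to 0$ there; the two-phase argument handles all $\epsilon<1/2$ uniformly without a separate case.
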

Here the notation $\E_{X \sim \nu}[f(X_{\sim S}, X_S \to 1)]$ refers to the expectation of $f(Y)$ where $X$ is drawn from $\mu$ and $Y_i = 1$ for $i \in S$ while $Y_i = X_i$ for $i \notin S$.

\subsection{Comparison to the Results on Phase Transitions for Monotone Measures}
We next compare our results to work by Graham and Grimmett \cite{graham2006influence} and results of
Duminil-Copin Raoufi and Tassion \cite{duminil2019sharp}
who proved a version of the KKL theorem and sharp thresholds for ``monotonic'' measures.
Consider a monotone function $f : \{0,1\}^n \to \{0,1\}$ and a measure $\mu$ on $ \{0,1\}^n$. Recall the definition of influence,
Definition~\ref{def:inf}.
We now define the {\em effect} $e_i(f,\mu)$ of a variable $i$ on $f$ under $\mu$ as
$\Cov_{\mu}[f,x_i] = \E_{\mu}[f x_i] - \E_{\mu}[f] E_{\mu}[x_i]$
(note that this is $p(1-p)$ times the effect as defined in~\cite{HaKaMo:06}).
We note that
\begin{enumerate}
\item
If $\mu$ is the uniform measure and $f$ is monotone then the effect and the influence are the same up to a constant factor. If $\mu$ is a monotone measure in the sense of \cite{graham2006influence} and $f$ is monotone, the size of the effect can be lower bounded by the influence using the FKG inequality (see \cite{graham2006influence}).
\item
The papers~\cite{graham2006influence} and \cite{duminil2019sharp} both prove sharp phase transitions based on the effects.
In~\cite{graham2006influence}, they do so by proving a version of KKL and in~\cite{duminil2019sharp} they do so by generalizing the results of \cite{OSSS:05} using effects.
Interestingly, their results do not require any correlation decay of the measure, so unlike our results they do not require the log-Sobolev inequality. They do require monotonicity of the measure which our results do not.
\end{enumerate}
There are very important differences between the interpretations of effects and influences. (The importance of this difference
was also discussed by Graham and Grimmett \cite{graham2006influence} where they called effects and influences the ``conditional influences'' and ``absolute influences'' respectively.)

To compare influences and effects in a concrete setting,
we consider the finite-volume Ising model with parameter $\beta$ on the square lattice in dimension $d \ge 2$.
In this (classical) setting, the vertices of our graph correspond to the integer elements of $[-L/2,L/2]^d$ where $L \ge 1$
is the sidelength of the box, and the edges $E$ of the graph connect vertices which are neighbors in the square lattice, i.e. which are
Euclidean distance $1$ from each other. Note that there are $n = (L + 1)^d$ many vertices in total.
Given this graph, the ferromagnetic Ising model is the distribution on $\{\pm 1\}^n$ of the form:
\[
\nu(x) \propto \exp \left( \beta \sum_{(i,j) \in E} x_i x_j \right).
\]
Let $\beta_c(d)$ be the critical inverse temperature of the lattice Ising model in dimension $d$ (see e.g. \cite{ellis2006entropy,pisztora1996surface}). Below $\beta_c$ is the high-temperature/subcritical regime and above $\beta_c$ is the low-temperature/supercritical regime of the model. Informally speaking, in the low temperature phase, the model exhibit symmetry breaking, a typical sample from the model lies either in a mostly $+$ phase or in a mostly $-$ phase, and because of this the Glauber dynamics mix torpidly.

Let $f$ be a monotone function from $\{\pm 1\}^n \to \{0,1\}$ with variance $\Omega(1)$.
The results of \cite{graham2006influence} imply that for all $\beta \geq 0$:
\begin{enumerate}
\item
 There exists a variable whose effect is at least $\Omega(\log n / n)$.
\item
There exists a set $S$ consisting of $O(n \ /\log n)$ many variables such that $E[f | X_S = +] = 1-o(1)$.
\end{enumerate}
As we will now illustrate, the analogous results with influences replaces by effects will fail badly due to the aforementioned phase transition in the Ising model.

The log-Sobolev inequality for this measure, see e.g.~\cite{bauerschmidt2022log} allows us to apply our results to deduce that for $\beta < \beta_c$, i.e. in the \emph{subcritical regime} of the model, we have that:
\begin{itemize}
\item There exists a variable whose influence is at least $\Omega(\log n / n)$.
\item There exists a set $S$ consisting of $O(n \ /\log n)$ many variables such that $E[f(X_{-S},X_S \to 1)] = 1-o(1)$.
\end{itemize}

On the other hand, when $\beta > \beta_c$, i.e. in the \emph{supercritical regime}, it immediately follows from rigorous results on the large deviations of the magnetization in the Ising model \cite{pisztora1996surface,bodineau2003slab} that:
\begin{itemize}
\item For every $i$, the effect of $X_i$ is $\Theta(1)$.
\item For every $i$, the influence of $X_i$ is $\exp(-\Theta(L^{d - 1}))$.
\item For a uniformly random set $S$ with $|S| = \omega(1)$ it holds that $E[f | X_S = +] = 1-o(1)$.
\item For every set $S$ with $|S| = o(n)$ it holds that $E[f(X_{-S},X_S \to 1)] = 0.5 + \exp(-\Theta(L^{d - 1}))$.
\end{itemize}
This shows that our results cannot be proven without assuming correlation decay.


Intuitively, for non-product measures there is a dramatic difference between fixing a variable and conditioning on a variable, as conditioning on a variable changes the measure and therefore changes all other variables.
This shows that our results and the results of GC and DCRT and incomparable. In the setting where both our results and theirs apply (monotone measures which satisfy Assumption~\ref{assumptions}), our versions of Talagrand and KKL are stronger since the influences lower bound the effects.

\section{Proof of Main Results}
In this section, we prove all of our results. It was observed by Cordero-Erasquin and Ledoux \cite{CorderoLedoux:12}
that Talagrand's inequality (and then KKL) can be deduced from an estimate of the form \eqref{eqn:basically-done} below. The most important contribution of our work is to prove this estimate (Theorem~\ref{thm:technical}) in our very general setting, which we do in Section~\ref{sec:technical} below. Given this estimate, we derive the generalized Talagrand's inequality and KKL in Section~\ref{sec:corollaries}, and then show how to obtain the consequences for coalitions in Section~\ref{sec:coalition}.

\subsection{Main functional inequality}\label{sec:technical}
The following is the main technical claim which implies Talagrand's inequality and KKL.
\begin{theorem}\label{thm:technical}
There exists absolute constants $c,c' > 0$ such that the following is true.
For any $\nu$ satisfying Assumption~\ref{assumptions}, $f : \Sigma^n \to \mathbb R$, and for any positive $T \le c/b^2q^2\Delta^2$,
\begin{equation} \Var_{\nu}(f) \le \frac{c' q^2 b^2}{1 - e^{-\rho T}} \int_0^T \sum_{j = 1}^n \|L_j f\|_{1 + e^{-2\rho t}}^2\, dt.
\label{eqn:basically-done}
\end{equation}
\end{theorem}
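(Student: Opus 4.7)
The plan is to start from the standard variance-decay identity for the heat semigroup,
\[ \Var_\nu(f) - \Var_\nu(H_T f) \;=\; 2 \int_0^T \mathcal{E}_\nu(H_s f, H_s f)\, ds \;=\; 2 \int_0^T \sum_j \|L_j H_s f\|_2^2\, ds, \]
and then invoke the Poincar\'e inequality $\Var_\nu(H_T f) \le e^{-\rho T}\Var_\nu(f)$, which follows from the assumed log-Sobolev inequality. This already reduces the theorem to bounding $\sum_j \|L_j H_s f\|_2^2$ by (a constant times) $\sum_j \|L_j f\|_{1 + e^{-2\rho s}}^2$ in an averaged sense over $s \in [0,T]$. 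If $L_j$ commuted with $H_s$, hypercontractivity would immediately give $\|L_j H_s f\|_2 = \|H_s L_j f\|_2 \le \|L_j f\|_{1 + e^{-2\rho s}}$, and we would be finished; the entire difficulty, and the reason for the dependence on $b, q, \Delta$ and the smallness of $T$, is the failure of commutativity in the Markov-random-field setting.

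To handle this non-commutativity, I would write $L_j H_s f = H_s L_j f + [L_j, H_s] f$ and expand the commutator via the Duhamel-type identity
\[ [L_j, H_s] \;=\; \int_0^s H_u \,[L_j, L]\, H_{s-u}\, du, \]
obtained by differentiating $u \mapsto H_u L_j H_{s-u}$. Writing $[L_j, L] = \sum_i [L_j, L_i]$, the Markov property is what collapses this sum: $P_i$ and $P_j$ commute whenever $i$ and $j$ are neither equal nor adjacent in $G$, since the resampling kernel of $P_i$ depends only on $x_{\mathcal{N}(i)}$. Thus only $i \in \{j\} \cup \mathcal{N}(j)$ contribute, leaving at most $\Delta + 1$ nonzero commutators for each $j$. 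Each can be controlled in $L^2$ by an estimate of the form $\|[L_j, L_i] g\|_2 \lesssim bq\,(\|L_i g\|_2 + \|L_j g\|_2)$, by extracting cancellation from $P_j P_i - P_i P_j$ and using the bounded-marginals hypothesis (constant $b$) to compare the two resampling kernels.

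Plugging these estimates back and using hypercontractivity on the main term gives an integral inequality of Gr\"onwall type,
\[ \|L_j H_s f\|_2 \;\le\; \|L_j f\|_{1 + e^{-2\rho s}} + Cbq \int_0^s \sum_{i \in \{j\} \cup \mathcal{N}(j)} \|L_i H_{s-u} f\|_2\, du. \]
Squaring, summing over $j$, and using $|\mathcal{N}(j)| \le \Delta$ together with Cauchy--Schwarz produces a scalar inequality for $\Phi(s) := \sum_j \|L_j H_s f\|_2^2$ with forcing $\Psi(s) := \sum_j \|L_j f\|_{1 + e^{-2\rho s}}^2$ and self-interaction kernel of size $O(b^2 q^2 \Delta^2 s)$. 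The smallness condition $T \le c/(b^2 q^2 \Delta^2)$ is exactly what is needed for the iteration to converge, yielding $\Phi(s) \lesssim q^2 b^2 \Psi(s)$ uniformly on $[0,T]$; integrating over $[0,T]$ and dividing by $1 - e^{-\rho T}$ then gives \eqref{eqn:basically-done} with the stated constant. The main obstacle I anticipate is the quantitative commutator estimate: the trivial bound $\|[L_j, L_i] g\|_2 \le 4\|g\|_2$ destroys the structure of $g = H_{s-u} f$ that we must preserve to close the recursion, so the real work lies in writing $[L_j, L_i]$ as a genuinely second-order object, bounded by the product of two coordinate derivatives, with the correct dependence on $b$ and $q$.
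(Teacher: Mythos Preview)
Your plan is correct and would prove the theorem; it is a genuinely different organization of the same ingredients used in the paper.

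\textbf{What the paper does.} The paper commutes $L_i$ all the way past $H_t$ in one shot, using the Hadamard lemma $H_t^{-1} L_i H_t = e^{[\,\cdot\,,tL]} L_i$ to write $L_i H_t = H_t M_{t,i}$ with $M_{t,i}$ an explicit series of iterated commutators $[\cdots[[P_i,P_{j_1}],P_{j_2}],\ldots,P_{j_k}]$ indexed by walks in $G$ starting at $i$. Hypercontractivity is then applied \emph{once} to $H_t M_{t,i} f$, and each commutator in the series is bounded in the $L^{1+e^{-2\rho t}}$ norm by $\max_r \|L_{j_r} f\|_{1+e^{-2\rho t}}$ (their Lemma~3.4). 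Summability of the series is what forces $T \lesssim (b^2q^2\Delta^2)^{-1}$.

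\textbf{What you do.} You stop at the first layer of the expansion (Duhamel), apply hypercontractivity only to the leading term $H_s L_j f$, throw away $H_u$ on the remainder by $L^2$-contractivity, invoke the single-commutator estimate $\|[P_j,P_i]g\|_2 \lesssim bq(\|L_i g\|_2 + \|L_j g\|_2)$, and close by Gr\"onwall on $\Phi(s)=\sum_j\|L_j H_s f\|_2^2$. This is equivalent to iterating Duhamel, which reproduces the Hadamard series, so the two arguments are dual to each other; yours is more analytic, the paper's more algebraic. Your route has the advantage that only the $k=1$ commutator estimate needs to be proved explicitly, and it may even yield a slightly better range for $T$ (the integrated Gr\"onwall closes when $b^2q^2\Delta^2 T^2 \lesssim 1$, i.e.\ $T\lesssim (bq\Delta)^{-1}$).

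Two small corrections. First, your phrase ``bounded by the product of two coordinate derivatives'' should read \emph{sum}: the usable estimate is exactly the one you wrote earlier, $\|[L_j,L_i]g\|_2 \lesssim bq(\|L_i g\|_2+\|L_j g\|_2)$, and it is proved just as in the paper's Lemma~3.4 specialized to $k=1$. Second, the Gr\"onwall step does not quite give the pointwise bound $\Phi(s)\lesssim \Psi(s)$ you state, because $\Psi$ is decreasing; what it does give cleanly is the integrated bound $\int_0^T \Phi \le C\int_0^T \Psi$, and that is all you need for \eqref{eqn:basically-done}.
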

\begin{proof}
Since the log-Sobolev inequality implies the Poincare inequality, we have that for any $T \ge 0$
\[ \Var(f) = \Var(f) - \Var(H_T f) + \Var(H_T f) \le \Var(f) - \Var(H_T f) + e^{-\rho T} \Var(F) \]
and so
\[ \Var(f) \le \frac{1}{1 - e^{-\rho T}} [\Var(f) - \Var(H_T f)]. \]
To upper bound $\Var(f)$,
it thereby suffices to upper bound for some $T > 0$ the quantity 
\begin{equation} \label{eq:dervar}
\Var(f) - \Var(H_T f) = \int_0^T \mathcal{E}(H_t f, H_t f) dt = \sum_i \int_0^T \E(L_i H_t f)^2 dt.
\end{equation}
The first equality in the equation above holds for any Markov semigroup as proven in~\cite{CorderoLedoux:12}.

We recall the following fact, sometimes called the Hadamard or Baker-Hausdorff Lemma:
\begin{lemma}[Proposition 3.35 of \cite{hall2013lie}]\label{lem:hadamard}
For square matrices $X,Y$, we have
$e^X Y e^{-X} = e^{[X,\cdot]} Y$.
\end{lemma}
The following lemma computes the effect of commuting $L_i$ and $H_T$.
\begin{lemma}
For any $T \ge 0$ and $i \in [n]$ we have
$L_i H_T = H_T M_{T,i}$
where
\begin{equation} \label{eq:M}
M_{T,i} := \sum_{k = 0}^{\infty} \frac{T^k}{k!} \sum_{(j_1,\ldots,j_k) \in \mathcal S_{k,i}} [ \cdots [[[P_i,P_{j_1}], P_{j_2}], P_{j_3}] \cdots P_{j_{k}}]. \end{equation}
Here
\begin{equation}\label{eqn:S}
\mathcal S_{k,i} := \{(j_1,\ldots,j_k) : j_i \in \mathcal{N}^+(\{i,j_1, \ldots, j_{i - 1}\})\}  \end{equation} and $\mathcal{N}^+(U)$ denotes the union of $U$ and the neighbors of nodes $U$ in the graph.
\end{lemma}
\begin{proof}
Note that by applying Lemma~\ref{lem:hadamard} to a negated matrix $X$, we have the identity for square matrices $X,Y$
\[ e^{-X} Y e^{X} = e^{[\cdot, X]} Y. \]
 Since $H_t = e^{tL}$, we therefore get
\[ H_T^{-1} L_i H_T = \sum_{k = 0}^{\infty} \frac{T^k}{k!} [L_i,L]^{(k)} \]
where $[L_i,L]^{(k)}$ denotes the iterated commutator of the following form: $[L_i,L]^{(0)} = L_i$ and $[L_i,L]^{(k)} = [[L_i,L]^{(k - 1)},L]$. 

To compute the commutator, first observe
\[ [L_i,L] = \sum_{j : i \sim j} [P_i,P_j] \]
since $L_i = P_i - I$ and $P_i$ commutes with $P_j$ when $i \not \sim j$. For the same reason, we have more generally that
\[ [L_i,L]^{(k)} = \sum_{(j_1,\ldots,j_k) \in \mathcal S_{k,i}} [ \cdots [[[P_i,P_{j_1}], P_{j_2}], P_{j_3}], \cdots P_{j_k}] \]
which proves the result.
\end{proof}
\begin{lemma}\label{lem:s-size}
With the notation of \eqref{eqn:S}, $|\mathcal S_{k,i}| \le (\Delta + 1)^k k^k$ for any $i,k$.
\end{lemma}
\begin{proof}
Observe that we can encode $j_k$ as an element of $[k] \times [\Delta + 1]$ by choosing one of its predecessors $i,\ldots,j_{k - 1}$ and specifying whether $j_k$ is equal to that node or one of that node's $\Delta$ neighbors. Performing this encoding recursively proves the result.
\end{proof}
Therefore recalling the definition of $M_{t,i}$ in (\ref{eq:M}) to get the first equality and applying hypercontractivity  to get the following inequality we have
\begin{align*}
\MoveEqLeft \int_0^T \|L_i H_t f\|_{2}^2 dt  \\
&= \int_0^T \|H_t M_{t,i}f\|_{2}^2 dt \\
&\le \int_0^T \|M_{t,i}f\|^2_{1 + e^{-2\rho t}} dt \\
&= \int_0^T \left\|\sum_{k = 0}^{\infty} \frac{t^k}{k!} \sum_{(j_1,\ldots,j_k) \in \mathcal{S}_{i,k}} [ \cdots [[[P_i,P_{j_1}], P_{j_2}], P_{j_3}], \cdots, P_{j_k}] f \right\|^2_{1 + e^{-2\rho t}} dt \\
&\le \int_0^T \left(\sum_{k = 0}^{\infty} \frac{t^k}{k!} \sum_{(j_1,\ldots,j_k) \in \mathcal{S}_{i,k}} \left\|[ \cdots [[[P_i,P_{j_1}], P_{j_2}], P_{j_3}], \cdots, P_{j_k}] f \right\|_{1 + e^{-2\rho t}}\right)^2 dt \\
&\le \left(\sum_{k = 0}^{\infty} \frac{T^k}{k!} (\Delta + 1)^k k^k\right) \int_0^T \sum_{k = 0}^{\infty} \frac{t^k}{k!} \sum_{(j_1,\ldots,j_k) \in \mathcal{S}_{i,k}} \left\|[ \cdots [[[P_i,P_{j_1}], P_{j_2}], P_{j_3}], \cdots, P_{j_k}] f \right\|_{1 + e^{-2\rho t}}^2 dt \\
&\le 2\int_0^T \sum_{k = 0}^{\infty} \frac{t^k}{k!} \sum_{(j_1,\ldots,j_k) \in \mathcal{S}_{i,k}} \left\|[ \cdots [[[P_i,P_{j_1}], P_{j_2}], P_{j_3}], \cdots, P_{j_k}] f \right\|_{1 + e^{-2\rho t}}^2 dt
\end{align*}
where we used the triangle inequality, in the second-to-last step we applied the Cauchy-Schwarz inequality and Lemma~\ref{lem:s-size}, and in the last step we used
the assumption that $T$ is small compared to $1/\Delta^2$.
\begin{lemma}\label{lem:term-bound}
For any $p \ge 1$, $i \in [n]$, $k \ge 0$ and for $\mathcal S_{i,k}$ as defined in \eqref{eqn:S}, we have
\[ \sum_{(j_1,\ldots,j_k) \in \mathcal{S}_{i,k}} \left\|[ \cdots [[[P_i,P_{j_1}], P_{j_2}], P_{j_3}], \cdots P_{j_k}] f \right\|^2_{p} \le 2(\Delta + 1)^k (k + 1)^{k + 4} (2qb)^{2k + 2} \max_{j : d_G(j,i) \le k} \|L_{j} f\|_p^2  \]
\end{lemma}
\begin{proof}
For notational convenience, define $j_0 = i$.
Observe that
\begin{align*}
&[ \cdots [[[P_i,P_{j_1}], P_{j_2}], P_{j_3}], \cdots, P_{j_k}]  \\
&= P_{j_k} [ \cdots[[[P_i,P_{j_1}], P_{j_2}], P_{j_3}], \cdots, P_{j_{k - 1}}] - [ \cdots[[[P_i,P_{j_1}], P_{j_2}], P_{j_3}], \cdots, P_{j_{k - 1}}]  P_{j_k} \\
&= \sum_{\alpha} (- 1)^{r(\alpha)} \left(P_{j_k} P_{\alpha} - P_{\alpha} P_{j_k}\right)
\end{align*}
where $\alpha$ ranges over a subset of permutations of $(j_0,\ldots,j_{k - 1})$ of size at most $2^{k}$ that arise when expanding out the iterated commutator, and $r(\alpha) \in \{0,1\}$ encodes the corresponding sign of this term.
Let
\[ K_{j_0,\ldots,j_k}(x) = \{ y : y_{\sim \{j_0,\ldots,j_k\}} = x_{\sim \{j_0, \ldots, j_k\}}\}  \]
denote the set of spin configurations which disagree with $x$ only within $\{i,j_1,\ldots,j_k\}$.
Using that the dynamics only update sites $j_0,\ldots,j_k$ and using the triangle inequality we have that
\begin{align*}
|([P_{j_k} P_{\alpha} - P_{\alpha} P_{j_k}]f)(x)| &\le \max_{y,y' \in K_{i,j_1,\ldots,j_k}(x)}  |f(y) - f(y')| \\
&\le (k + 1) \max_{z,z' \in K_{i,j_1,\ldots,j_k}(x)  : d_H(z,z') = 1} |f(z) - f(z')|.
\end{align*}
Hence taking the average over $x$, we find
\begin{align*}
\MoveEqLeft
\sum_x \nu(x) |([P_{j_k} P_{\alpha} - P_{\alpha} P_{j_k}]f)(x)|^p \\
&\le (k + 1)^p \sum_x \nu(x)  \max_{z,z' \in K_{j_0,\ldots,j_k}(x) : d_H(z,z') = 1} |f(z) - f(z')|^p  \\
&\le 2^p (k + 1)^p \sum_x \nu(x)  \max_{z \in K_{j_0,\ldots,j_k}(x), \ell \in \{j_0,\ldots,j_k\}} |(L_{\ell} f)(z)|^p  \\
&\le 2^p (k + 1)^p \sum_x \nu(x)  \max_{z \in K_{j_0,\ldots,j_k}(x)} (|(L_{j_0} f)(z)| + \cdots + |(L_{j_k} f)(z)|)^p  \\
&\le 2^p (k + 1)^p (qb)^{k + 1} \sum_z \nu(z) (|(L_{j_0} f)(z)| + \cdots + |(L_{j_k} f)(z)|)^p
\end{align*}
where in the second to last step we used Lemma~\ref{lem:diff-bound}, and we arrived at the last step by considering the $z$ which achieves the inner maximum, and used the fact that $\nu(x) \le b^{k + 1} \nu(z)$ and that there are at most $q^{k + 1}$ such $x$ for each $z$.
Hence by the $L_p$ triangle inequality, $p \ge 1$, and $1 \le b$,
\[ \|[P_{j_k} P_{\alpha} - P_{\alpha} P_{j_k}]f\|_p \le 2(k + 1) (qb)^{k + 1} \sum_{r = 0}^k \|L_{j_r} f\|_p \le  2(k + 1)^2 (qb)^{k + 1} \max_{r} \|L_{j_r} f\|_p. \]
 Using that $\alpha$ ranges over a set of size at most $2^{k}$, we find by the $L_p$ triangle inequality
\[ \|[ \cdots [[[P_i,P_{j_1}], P_{j_2}], P_{j_3}], \cdots, P_{j_k}]f\|_p  \le \sum_{\alpha} \|P_{j_k} P_{\alpha} - P_{\alpha} P_{j_k}]\|_p \le 2(k + 1)^2 (2qb)^{k + 1}  \max_{0 \le r \le k} \|L_{j_r} f\|_p   \]
and using Lemma~\ref{lem:s-size} we have
\[ \sum_{(j_1,\ldots,j_k) \in \mathcal{S}_{i,k}} \left\|[ \cdots [[[P_i,P_{j_1}], P_{j_2}], P_{j_3}], \cdots P_{j_k}] f \right\|^2_{p} \le 2(\Delta + 1)^k (k + 1)^{k + 4} (2qb)^{2k + 2} \max_{j : d_G(j,i) \le k} \|L_{j} f\|_p^2  \]
as desired.
\end{proof}
\begin{lemma}\label{lem:diff-bound}
Suppose $\nu$ is a distribution on $\Sigma^n$. 
For any function $f : \Sigma^n \to \mathbb R$, and $y,z \in \Sigma^n$ differing only at site $i$ we have
\[ \frac{1}{2} |f(y) - f(z)| \le \max\{|(L_i f)(y)|,|(L_i f)(z)|\}. \]
For any $x \in \Sigma^n$ we have
\[ |(L_i f)(x)| \le \max_{y,z : x_{\sim i} = y_{\sim i} = z_{\sim i}} |f(y) - f(z)|. \]
\end{lemma}
\begin{proof}
Expanding the definition, we have
\[ (L_i f)(x) = (P_i f)(x) - f(x) = \E[f(X) \mid X_{\sim i} = x_{\sim i}] - f(x) \]
so the latter bound follows immediately, and the former bound follows from the triangle inequality as
\[ |f(y) - f(z)| \le |(L_j f)(y)| + |(L_j f)(z)| \le 2 \max \{|(L_j f)(y)|, |(L_j f)(z)|\}. \]
\end{proof}

Using Lemma~\ref{lem:term-bound}, if $T \le c/q^2 b^2\Delta^2$ for some absolute constant $c > 0$, we have for all $t \le T$ that for some constant $c' > 0$,
\[ \sum_{k = 0}^{\infty} \frac{t^k}{k!} \sum_{(j_1,\ldots,j_k) \in \mathcal{S}_{i,k}} \left\|[ \cdots [[[P_i,P_{j_1}], P_{j_2}], P_{j_3}], \cdots, P_{j_k}] f \right\|^2_{1 + e^{-2\rho t}} \le q^2 b^2 \sum_{j = 1}^n (c'/\Delta)^{d_G(j,i)} \|L_j f\|_{1 + e^{-2\rho t}}^2 \]
and summing over $i$ and using that the number of nodes at exactly distance $k$ from node $j$ is at most $\Delta^k$, this gives
\[ 2\sum_i \sum_{k = 0}^{\infty} \frac{t^k}{k!} \sum_{(j_1,\ldots,j_k) \in \mathcal{S}_{i,k}} \left\|[ \cdots [[[P_i,P_{j_1}], P_{j_2}], P_{j_3}], \cdots, P_{j_k}] f \right\|^2_{1 + e^{-2\rho t}} \le c' q^2 b^2 \sum_{j = 1}^n \|L_j f\|_{1 + e^{-2\rho t}}^2. \]

Hence we have for $T \le c/q^2 b^2\Delta^2$ that
\[ \int_0^T \|L_i H_t f\|_2^2 dt \le c' q^2 b^2 \int_0^T \sum_{j = 1}^n \|L_j f\|_{1 + e^{-2\rho t}}^2\, dt \]
which gives the desired bound
\begin{equation*} \Var(f) \le \frac{c' q^2 b^2}{1 - e^{-\rho T}} \int_0^T \sum_{j = 1}^n \|L_j f\|_{1 + e^{-2\rho t}}^2\, dt.
\end{equation*}
\end{proof}
\subsection{Generalized Talagrand and KKL Inequalities}\label{sec:corollaries}
We now show how to deduce the Talagrand and KKL inequalities from Theorem~\ref{thm:technical}.
The proof of these implications follows from the work of Cordero-Erausquin and Ledoux \cite{sh} and is reproduced for convenience.
The first result generalizes Talagrand's inequality:
\begin{theorem}\label{thm:talagrand}
For any $n \ge 1$, $\nu$ satisfying Assumption~\ref{assumptions}, and any $f : \Sigma^n \to \mathbb R$, we have
\begin{equation} \Var_{\nu}(f) \le \frac{Cq^4 b^4 \Delta^2}{\rho} \sum_j \frac{\|L_j f\|_2^2}{1 + \log(\|L_j f\|_2/\|L_j f\|_1)}
\end{equation}
for some absolute constant $C > 0$.
\end{theorem}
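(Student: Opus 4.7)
The plan is to derive Theorem~\ref{thm:talagrand} from the functional inequality of Theorem~\ref{thm:technical} using classical log-convexity of $L_p$ norms (the Lyapunov inequality), which reduces the matter to an explicit one-dimensional integral in $t$. Since Theorem~\ref{thm:technical} has already absorbed all of the non-commutative commutator analysis, from this point the argument is purely scalar and parallels the approach of Cordero-Erausquin and Ledoux.

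First I would fix $T := c/(b^2 q^2 \Delta^2)$, the largest value of $T$ permitted by Theorem~\ref{thm:technical}, shrinking the absolute constant $c$ if needed so that $\rho T \le 1$ (possible since $\rho \le 1$ by Assumption~\ref{assumptions}); in particular $1 - e^{-\rho T} \ge \rho T/2$. Next, for each coordinate $j$, I would apply Lyapunov's inequality $\|g\|_p \le \|g\|_1^{\theta}\|g\|_2^{1-\theta}$ (valid for $p \in [1,2]$ with $1/p = \theta + (1-\theta)/2$) to $g = L_j f$ at exponent $p = 1 + e^{-2\rho t}$. A short check shows $\theta(t) = 2/p - 1 = \tanh(\rho t)$, giving
\[
\|L_j f\|_{1+e^{-2\rho t}}^2 \le \|L_j f\|_2^2 \cdot r_j^{2\tanh(\rho t)}, \qquad r_j := \|L_j f\|_1/\|L_j f\|_2 \in (0,1],
\]
where $r_j \le 1$ follows from Jensen's inequality.

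Third, I would bound the scalar integral $I_j := \int_0^T r_j^{2\tanh(\rho t)}\, dt$. Using $\tanh(\rho t) \ge \rho t/2$ on $[0, T]$ (valid since $\rho T \le 1$) gives $r_j^{2\tanh(\rho t)} \le \exp(-\rho t A_j)$ with $A_j := \log(1/r_j) \ge 0$, whence direct integration yields $I_j \le \min\!\bigl(T,\, 1/(\rho A_j)\bigr)$. A short case analysis on whether $A_j \le 1$ or $A_j > 1$ (using $T \le 1/\rho$ in the former) collapses this to the uniform bound $I_j \le C_0/(\rho(1+A_j))$ for an absolute constant $C_0$. Plugging this back into Theorem~\ref{thm:technical} and using $1/(1-e^{-\rho T}) \le 2/(\rho T) = O(b^2 q^2 \Delta^2/\rho)$ then delivers the claimed inequality after absorbing constants into $C$. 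I do not expect a serious technical obstacle since the hard commutator work is already finished in Theorem~\ref{thm:technical}; the only step requiring real care is the treatment of small $A_j$, where one must use the trivial bound $I_j \le T$ rather than $1/(\rho A_j)$ (which blows up). This is precisely what produces the ``$1+$'' inside the logarithm on the right-hand side of the final statement, keeping the bound finite when $\|L_j f\|_1$ is comparable to $\|L_j f\|_2$.
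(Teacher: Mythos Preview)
Your proposal is correct and follows essentially the same route as the paper: both start from Theorem~\ref{thm:technical}, apply the interpolation inequality $\|L_j f\|_p \le \|L_j f\|_1^{\theta}\|L_j f\|_2^{1-\theta}$ with $\theta = 2/p-1$, and reduce to a one-dimensional integral that is bounded by $O\bigl(1/[\rho(1+\log(\|L_jf\|_2/\|L_jf\|_1))]\bigr)$. The only cosmetic difference is that the paper evaluates the integral via the changes of variable $t\mapsto p$ and $p\mapsto s=2\theta(p)$, whereas you instead note $\theta(t)=\tanh(\rho t)\ge \rho t/2$ and integrate $e^{-\rho t A_j}$ directly; both are valid and yield the same bound up to absolute constants.
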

\begin{proof}
Making the change of variables $p = 1 + e^{-2\rho t}$, $dp = -2\rho e^{-2\rho t} dt$ and assuming $T \le 1/2\rho$ we have by Holder's inequality
\[ \int_0^T \|L_j f\|^2_{1 + e^{-2\rho t}} dt \le \frac{2}{\rho} \int_1^2 \|L_j f\|^2_p dp \le \frac{2}{\rho} \|L_j f\|_2^2 \int_1^2 d_j^{2\theta(p)} dp  \]
where $1/p = \theta + (1 - \theta)/2 = (1 + \theta)/2$ and
\[ d_j := \|L_j f\|_1/\|L_j f\|_2 \le 1. \]
Note that
\[ \frac{d\theta}{dp} = -2/p^2 \]
so making the change of variables $s = 2\theta(p)$, $ds = (-4/p^2) dp$ we have
\begin{align*}
\int_1^2 d_j^{2\theta(p)} dp
&\le \int_0^2 d_j^s (p(s)^2/4) ds \\
&\le \int_0^2 d_j^s ds = \frac{1 - d_j^2}{\log(1/d_j)} = \frac{(1 - d_j^2)(1 + 1/\log(1/d_j))}{1 + \log(1/d_j)} \le \frac{2}{1 + \log(1/d_j)}.
\end{align*}
hence
\[ \int_0^T \|L_j f\|^2_{1 + e^{-2\rho t}} dt \le \frac{4}{\rho(1 + \log(1/d_j))}. \]
Combining with Theorem~\ref{thm:technical}, we have for $T = c/q^2 b^2 \Delta^2$ that for some absolute constant $C > 0$
\[ \Var(f) \le \frac{Cq^4 b^4 \Delta^2}{\rho}  \sum_j \frac{\|L_j f\|_2^2}{1 + \log(\|L_j f\|_2/\|L_j f\|_1)} \]
which proves the analogue of Talagrand's inequality.
\end{proof}
Now we generalize KKL:
\begin{theorem}\label{thm:kkl}
There exists $\alpha_{b,\rho,\Delta,q} > 0$ such that the following is true.
For any $n \ge 1$, $\nu$ satisfying Assumption~\ref{assumptions}, and any $f : \Sigma^n \to \{0,1\}$,
there exists a coordinate $k \in [n]$ such that
\[ I_k(f) \ge \alpha_{b,\rho,\Delta,q} \Var(f)\log(n)/n. \]
\end{theorem}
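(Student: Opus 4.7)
The plan is to deduce Theorem~\ref{thm:kkl} from the generalized Talagrand inequality of Theorem~\ref{thm:talagrand} via the classical KKL extraction, with one extra dictionary step translating the $L^p$ norms of $L_j f$ into the influence $I_j(f)$ using the bounded-marginals hypothesis.

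First I would translate norms to influences. For Boolean $f$, write $p(x_{\sim j}) = (P_j f)(x)$, so that $(L_j f)(x) = p(x_{\sim j}) - f(x)$ vanishes on any fiber where $f$ is constant. On a non-constant fiber, a direct case split over $f(X) \in \{0,1\}$ gives $\E[(L_j f)^2 \mid X_{\sim j}] = p(1-p)$ and $\E[|L_j f| \mid X_{\sim j}] = 2 p(1-p)$. Integrating over $X_{\sim j}$ yields the identity
\[ \|L_j f\|_1 = 2\,\|L_j f\|_2^2. \]
Assumption~\ref{assumptions}(1) forces every symbol to have conditional probability at least $1/(qb)$, so on a non-constant fiber both $p$ and $1 - p$ are at least $1/(qb)$, giving $p(1-p) \in [1/(qb)^2,\, 1/4]$ and therefore the two-sided comparison
\[ \frac{1}{(qb)^2}\, I_j(f) \;\le\; \|L_j f\|_2^2 \;\le\; \frac{1}{4}\, I_j(f). \]

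Substituting into Theorem~\ref{thm:talagrand}, the ratio $\|L_j f\|_2/\|L_j f\|_1 = 1/(2\|L_j f\|_2)$ turns the logarithmic denominator into $1 + \log(1/(2\|L_j f\|_2))$, and the comparison $\|L_j f\|_2^2 \asymp_{b,q} I_j(f)$ then gives, for some constant $C = C_{b,\rho,\Delta,q}$,
\[ \Var(f) \;\le\; C \sum_{j=1}^n \frac{I_j(f)}{1 + \log(1/I_j(f))}, \]
with the convention that the $j$-th summand is $0$ whenever $I_j(f) = 0$.

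Finally I would run the standard KKL extraction. Let $\tau = \max_j I_j(f)$. If $\tau \ge \log(n)/n$ then, since $\Var(f) \le 1/4$, the claim $\tau \ge \Var(f) \log(n)/n$ holds trivially. Otherwise $\log(1/\tau) \ge \log(n/\log n) \ge \tfrac{1}{2}\log n$ for $n$ larger than an absolute constant, and using $I_j \le \tau$ in every denominator gives
\[ \Var(f) \;\le\; \frac{C}{1 + \log(1/\tau)} \sum_{j=1}^n I_j(f) \;\le\; \frac{2C\, n \tau}{\log n}, \]
from which $\tau \ge \alpha_{b,\rho,\Delta,q} \Var(f) \log(n)/n$ follows immediately. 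There is no real obstacle beyond Theorem~\ref{thm:talagrand} itself; the only delicate point is that both directions of the comparison $\|L_j f\|_2^2 \asymp I_j(f)$ are needed, and it is precisely the bounded-marginals hypothesis that keeps the constant in the lower bound away from zero (it would degenerate for measures allowed to be arbitrarily skewed after pinning).
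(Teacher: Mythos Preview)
Your proof is correct and follows essentially the same route as the paper: translate Theorem~\ref{thm:talagrand} into an inequality of the form $\Var(f)\le C\sum_j I_j(f)/(1+\log(1/I_j(f)))$ by comparing $\|L_j f\|_p$ with $I_j(f)$ (the paper does this via Lemma~\ref{lem:influence-derivative} rather than your sharper identity $\|L_j f\|_1=2\|L_j f\|_2^2$), and then extract the maximal influence. The only cosmetic differences are that the paper argues by contradiction instead of your direct case split on $\tau$, and that, thanks to your identity, only the upper bound $\|L_j f\|_2^2\le I_j(f)/4$ is actually used in the translation step, so your closing remark that both directions of the comparison are essential is a bit stronger than what the argument requires.
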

\begin{proof}
By combining Lemma~\ref{lem:influence-derivative} with Theorem~\ref{thm:talagrand} we have that
\begin{equation}\label{eqn:kkl-step} \Var(f) \le C \sum_j \frac{I_j(f)}{1 - \log(bq\sqrt{I_j(f)})}
\end{equation}
where $C = C_{b,\rho,\Delta,q} > 0$.
Fix $b,\rho,\Delta,q$ and
suppose for contradiction that the conclusion of the theorem is false. The conclusion of the theorem is trivially true if $n = 1$, so it must be that for any $\alpha \in [0,1]$ there exists $n \ge 2$, $\nu$ satisfying Assumption~\ref{assumptions}, and $f : \Sigma^n \to \{0,1\}$ so that
\[ I_k(f) \le \alpha \Var(f) \log(n)/n \]
 for all $k \in[n]$. In particular $I_k(f) \le \alpha \log(n)/n$ since $\Var(f) \le 1$.
 Combining with \eqref{eqn:kkl-step} and dividing through by $\Var(f)$, we have
\begin{align*} 1
&\le \frac{C \alpha  \log(n)}{1 - \log(bq\sqrt{\alpha \log(n)/n}))} \\
&= \frac{C \alpha \log n}{1 - \log(bq \alpha^{1/2}) + (1/2) [\log(n) - \log \log(n)]} \\
&= \frac{C \alpha}{1/\log(n) - \log(bq \alpha^{1/2})/\log(n) + (1/2) [1 - [\log \log(n)]/\log(n)]}.
\end{align*}
which is a contradiction for any
\[ \alpha < \min\left\{\frac{1}{b^2q^2}, \frac{1}{C} \inf_{n \ge 2} \left[1/\log(n) + (1/2) [1 - [\log \log(n)]/\log(n)] \right] \right\}. \]
\end{proof}

\begin{lemma}\label{lem:influence-derivative}
For $f : \Sigma^n \to \{0,1\}$ and any $\nu$ satisfying Assumption~\ref{assumptions}, we have for any $p \ge 1$
\[ I_i(f) \ge \E |L_i f|^p \ge \frac{1}{(qb)^p} I_i(f) \]
\end{lemma}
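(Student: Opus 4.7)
The plan is to prove both inequalities by reducing them to pointwise bounds on $|L_i f|(x)$, exploiting the fact that $L_i f$ is supported on the set of $x$ where $f$ is not constant on the $i$-th fiber (the "pivotal" set), whose $\nu$-measure is precisely $I_i(f)$.

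First I would establish the key pointwise support fact: if $x$ is such that $f(x_1,\ldots,x_{i-1},c,x_{i+1},\ldots,x_n)$ is the same for every $c \in \Sigma$, then $(P_i f)(x) = f(x)$, hence $(L_i f)(x) = 0$. Thus both $|L_i f|^p$ and the pivotality indicator are supported on the same set, and the problem reduces to comparing their magnitudes there.

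For the upper bound $I_i(f) \ge \E|L_i f|^p$, I would use that $f \in \{0,1\}$ implies $(P_i f)(x) \in [0,1]$, so $|L_i f|(x) \le 1$ pointwise, and therefore $|L_i f|^p \le |L_i f| \le \mathbf{1}[x \text{ pivotal at } i]$ for $p \ge 1$. Integrating against $\nu$ yields the bound immediately.

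For the lower bound $\E|L_i f|^p \ge (qb)^{-p} I_i(f)$, I would show that on the pivotal set, $|L_i f|(x) \ge 1/(qb)$. The main mini-lemma is the conditional probability bound: for any value $c \in \Sigma$, Assumption~\ref{assumptions}(1) forces $\nu(X_i = c \mid X_{-i} = x_{-i}) \ge 1/(qb)$, since the numerator differs from any other conditional mass by a factor in $[1/b, b]$ (each pair of configurations differs by a single coordinate flip at $i$) and the denominator is a sum of at most $q$ such terms. If $x$ is pivotal at $i$, pick $a' \in \Sigma$ with $f(x_1,\ldots,a',\ldots,x_n) \ne f(x)$. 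A two-line case split on whether $f(x) = 0$ or $f(x) = 1$ then shows $(P_i f)(x)$ is pulled at least $1/(qb)$ away from $f(x)$, so $|L_i f|(x) \ge 1/(qb)$, and hence $|L_i f|^p \ge (qb)^{-p}\,\mathbf{1}[x \text{ pivotal}]$. Integrating finishes the proof.

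There is no serious obstacle here; the only step that genuinely uses the hypotheses is the conditional probability bound $\nu(X_i = c \mid X_{-i} = x_{-i}) \ge 1/(qb)$, which is a direct consequence of Assumption~\ref{assumptions}(1) and is exactly where the factor $(qb)^{-p}$ enters.
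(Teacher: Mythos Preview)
Your proposal is correct and matches the paper's proof essentially step for step: both arguments reduce to the pointwise bounds $|(L_i f)(x)| \le \bone[x \text{ pivotal at } i]$ and $|(L_i f)(x)| \ge (qb)^{-1}\bone[x \text{ pivotal at } i]$, the latter via the conditional probability estimate $\Pr(X_i = c \mid X_{\sim i} = x_{\sim i}) \ge 1/(qb)$ coming from Assumption~\ref{assumptions}(1). Your write-up is in fact slightly more explicit than the paper's about why that conditional bound holds.
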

\begin{proof}
Recall that
\[ I_i(f) = \Pr_{X \sim \nu}[\exists x'_i \in \Sigma, f(X) \ne f(X_1,\ldots,X_{i - 1}, x'_i, X_{i + 1}, \ldots, X_n)]. \]
Given $x \in \Sigma^n$, if $f(x) = f(x_1,\ldots,x_{i - 1}, x'_i, x_{i + 1}, \ldots, x_n)$ for all $x'_i \in \Sigma$ then this means that
$(L_i f)(x) = 0$. Since $|L_i f| \le 1$, this implies that
\[ |(L_i f)(x)| \le \bone(\exists x'_i \in \Sigma, f(x) \ne f(x_1,\ldots, x_{i - 1}, x'_i, x_{i + 1}, \ldots, x_n)). )\]
 On the other hand, if there exists some $x'_i$ such that $f(x) \ne f(x_1,\ldots,x_{i - 1}, x'_i, x_{i + 1}, \ldots, x_n)$ then this implies that
\[ |(L_i f)(x)| = |f(x) - \E[f(X) \mid X_{\sim i} = x_{\sim i}]| \ge \Pr(X_i = x'_i \mid X_{\sim i} = x_{\sim i}) \ge 1/qb \]
by \eqref{eqn:mu}. Therefore
\[ \frac{1}{qb}\bone(\exists x'_i \in \Sigma, f(x) \ne f(x_1,\ldots, x_{i - 1}, x'_i, x_{i + 1}, \ldots, x_n)) \le |(L_i f)(x)| \]
 Hence taking expectation over $X$ we have for any $p \ge 1$
\[ I_i(f) \ge \E |L_i f|^p \ge \frac{1}{(qb)^p} I_i(f) \]
as claimed.
\end{proof}
\subsection{Application to coalitions}\label{sec:coalition}
We now discuss the application of our result to the existence of coalitions for monotone
voting rules. In this section, we restrict to the case of $\Sigma = \{\pm 1\}$ and
recall that a function $f : \{\pm 1\}^n \to \mathbb{R}$ is \emph{monotone} if
\[ f(x) \le f(y) \]
for any pair such that $x \le y$  coordinatewise.

The following corollary shows in particular that a coalition of size $\omega(n/\log(n))$ has influence $1 - o(1)$ on a fair election. It follows by iteratively applying our generalization of the KKL theorem, and generalizes
Corollary 3.5 of \cite{KaKaLi:88} where the case of the uniform measure was considered.
\begin{corollary}\label{corr:coalition}
For any $n \ge 1$ and $\nu$ satisfying Assumption~\ref{assumptions}, the following is true.
For any $\epsilon > 0$ and and monotone function $f : \{\pm 1\}^n \to \{0,1\}$ satisfying $\E_{\nu}[f] \ge \epsilon$, there exists a set of coordinates $S \subset [n]$ such that
\[ \E_{X \sim \nu}[f(X_{\sim S}, X_S \to 1)] \ge 1 - \epsilon \]
and
\[ |S| \le \frac{4(1 + b) \log(1/2\epsilon)}{\alpha_{b,\rho,\Delta}} \cdot \frac{n}{\log(n)} \]
where $\alpha_{b,\rho,\Delta} > 0$ is the constant (independent of $n$) from Theorem~\ref{thm:kkl}.
\end{corollary}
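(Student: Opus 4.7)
The plan is to build $S$ greedily, adding one coordinate at a time, and to bound how fast $\E_{X \sim \nu}[f(X_{\sim S}, X_S \to 1)]$ grows to $1-\epsilon$ by repeatedly applying Theorem~\ref{thm:kkl}. The key move that avoids a subtle issue around conditioning is to keep the measure $\nu$ fixed throughout and instead let the function change.

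Concretely, for $S \subseteq [n]$ define $g_S : \{\pm 1\}^n \to \{0,1\}$ by $g_S(x) := f(x_{\sim S}, x_S \to 1)$. Since $f$ is monotone and $\{0,1\}$-valued, so is $g_S$; it does not depend on coordinates in $S$; and $\E_\nu[g_S] = \E_{X \sim \nu}[f(X_{\sim S}, X_S \to 1)]$. Starting from $S_0 = \emptyset$, $g_{S_0} = f$, $a_0 := \E_\nu[f] \ge \epsilon$, I iterate the following step: while $a_t := \E_\nu[g_{S_t}] < 1-\epsilon$, apply Theorem~\ref{thm:kkl} to the $\{0,1\}$-valued function $g_{S_t}$ under $\nu$ (which still satisfies Assumption~\ref{assumptions}) to obtain some $k \in [n]$ with $I_k(g_{S_t}) \ge \alpha_{b,\rho,\Delta} \Var_\nu(g_{S_t}) \log(n)/n$. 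Since $g_{S_t}$ ignores coordinates in $S_t$ they have zero influence, forcing $k \notin S_t$; set $S_{t+1} = S_t \cup \{k\}$.

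The progress estimate per step is the heart of the calculation. Since $g_{S_{t+1}}(x) = g_{S_t}(x_{\sim k}, 1)$, splitting by the value of $X_k$ gives
\[
a_{t+1} - a_t = \E_\nu\bigl[\bone(X_k = -1)\,(g_{S_t}(X_{\sim k}, 1) - g_{S_t}(X_{\sim k}, -1))\bigr],
\]
and monotonicity of $g_{S_t}$ in coordinate $k$ makes the integrand a $\{0,1\}$-valued indicator. Conditioning on $X_{\sim k}$ and invoking the bounded-marginal hypothesis $\Pr_\nu(X_k = -1 \mid X_{\sim k}) \ge 1/(1+b)$ gives $a_{t+1} - a_t \ge I_k(g_{S_t})/(1+b)$. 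Combining with the KKL lower bound on $I_k(g_{S_t})$ and using $\Var_\nu(g_{S_t}) = a_t(1-a_t)$ yields the logistic-type recurrence
\[ a_{t+1} \ge a_t + c\, a_t(1-a_t), \qquad c := \frac{\alpha_{b,\rho,\Delta} \log n}{n(1+b)}. \]

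To close, I analyze this recurrence in two phases. While $a_t \le 1/2$, one has $a_{t+1} \ge a_t(1+c/2)$, so $a_t$ grows geometrically from $\epsilon$ to $1/2$ in $O(\log(1/(2\epsilon))/c)$ steps. Once $a_t \ge 1/2$, setting $\delta_t := 1 - a_t$ gives $\delta_{t+1} \le \delta_t(1-c/2)$, so $\delta_t$ decays geometrically from $1/2$ down to $\epsilon$ in $O(\log(1/(2\epsilon))/c)$ additional steps. Adding the two bounds and substituting the value of $c$ yields the claimed size $|S| \le 4(1+b)\log(1/(2\epsilon))\,n/(\alpha_{b,\rho,\Delta}\log n)$. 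The main obstacle is a conceptual one: one must resist the temptation to replace $\nu$ by the conditional $\nu(\cdot \mid X_S = 1_S)$ at each iteration (whose log-Sobolev constant under pinning would need separate justification). Freezing $\nu$ and varying the function to $g_{S_t}$ sidesteps this entirely, letting the bounded-marginal hypothesis enter only in the progress estimate.
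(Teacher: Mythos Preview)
Your proposal is correct and follows essentially the same approach as the paper's proof: both build $S$ greedily, apply Theorem~\ref{thm:kkl} to the function $g_{S_t}(x)=f(x_{\sim S_t},1_{S_t})$ under the \emph{fixed} measure $\nu$, derive the progress estimate $a_{t+1}-a_t\ge I_{k}(g_{S_t})/(1+b)$ from the bounded-marginal hypothesis, and analyze the resulting recurrence $a_{t+1}\ge a_t+c\,a_t(1-a_t)$ in two phases. Your explicit remark that one must keep $\nu$ fixed (rather than condition), and your observation that $k\notin S_t$ is forced since coordinates in $S_t$ have zero influence, are nice clarifications but do not change the argument.
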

\begin{proof}
We construct a sequence of sets $S_0,S_1,\ldots$ iteratively.
Let $S_0 = \{\}$.
For each $t \ge 0$, define $f_t(x) = f(x_{\sim S_t}, 1_{S_t})$, i.e. $f_t$ is the same as $f$ except that
it ignores the input $x_{S_t}$ and replaces it by all-ones. 
Either
\[ \Pr_{X \sim \nu}[f_t = 1] \ge 1 - \epsilon \]
or we define a set $S_{t + 1}$ in the following way. By Theorem~\ref{thm:kkl},
there exists some $k_t \in [n]$ such that
\[ I_{k_t}(f_t) \ge \alpha \Var(f_t) \frac{\log(n)}{n} \]
where $\alpha = \alpha_{b,\rho,\Delta} > 0$ does not depend on $n$,
and we let $S_{t + 1} = S_t \cup k_t$. Now defining $f_{t + 1}(x) = f(x_{\sim S_{t + 1}}, 1_{S_{t + 1}})$,
we have by monotonicity that
\[ \Pr_{\nu}(f_{t + 1} = 1) = \Pr_{\nu}(f_t = 1) + \Pr_{\nu}(f_{t + 1} > f_t). \]
Furthermore,
\begin{align*}
\Pr_{\nu}(f_{t + 1} > f_t)
&= \E_{X \sim \nu}[1(f_{t}(X_{\sim k_t}, 1) > f_t(X))] \\
&=  \E_{X \sim \nu}[1(f_{t}(X_{\sim k_t}, 1) > f_t(X)) \cdot 1(X_{k_t} = -1)] \\
&=  \E_{X \sim \nu}[1(f_{t}(X_{\sim k_t}, 1) > f_{t}(X_{\sim k_t}, -1)) \cdot 1(X_{k_t} = -1)] \\
&= \E_{X \sim \nu}[1(f_{t}(X_{\sim k_t}, 1) > f_{t}(X_{\sim k_t}, -1)) \cdot \Pr(X_{k_t} = -1 \mid X_{\sim k_t})] \\
&\ge \frac{I_{k_t}(f_t)}{1 + b}
\end{align*}
where in the last equality we applied the law of total expectation, and in the final step we used that
\[ \Pr_{X \sim \nu}(X_{k_t} = -1 \mid X_{\sim k_t}) \ge \frac{1}{1 + b} \]
by Assumption~\ref{assumptions}. Therefore, if $p_t = \Pr(f_t = 1)$ we have that
\[ p_{t + 1} \ge p_t + \frac{\alpha}{1 + b} p_t(1 - p_t) \frac{\log(n)}{n}. \]
It follows that if $p_t < 1/2$, $p_{t + 1} \ge (1 + \frac{\alpha \log(n)}{(1 + b)n}) p_t \ge \exp\left(\frac{\alpha \log(n)}{2(1 + b)n}\right) p_t$,
so $p_t > 1/2$ for any $t > \frac{2(1 + b)n}{\alpha \log(n)} \log(1/2\epsilon)$. By a symmetrical argument,
we have that $p_t \ge 1 - \epsilon$ for $t > \frac{4(1 + b)n}{\alpha \log(n)} \log(1/2\epsilon)$.
\end{proof}
\section{Some examples}\label{sec:example}
There is a vast literature establishing log-Sobolev inequalities for spin systems on the hypercube.
For concreteness, we give a few examples of settings where the log-Sobolev constant is known
to be bounded, and as a consequence our results can be applied.

\paragraph{Sparse Markov random field under $\ell_2$-Dobrushin uniqueness condition.} Suppose that $\nu$
is a Markov random field on a graph of maximum degree $\Delta$ with $n$ vertices, and
define the Dobrushin matrix $A \in \mathbb R^{n \times n}$ to have zero diagonal and off-diagonal entries
\[ A_{ij} = \max_{y \in \Sigma^n, z} d_{TV}(\Pr_{\nu}[X_i = \cdot \mid X_{\sim i} = y_{\sim i}], \Pr_{\nu}[X_i = \cdot \mid X_{\sim i,j} = y_{\sim i,j}, X_j = z]). \]
Suppose also that $\nu$ satisfies the $b$-bounded marginal assumption from Assumption~\ref{assumptions}.
Then if $\|A\|_{OP} < 1$, it was shown by Marton
 \cite{marton2019logarithmic} that $\nu$ satisfies the log-Sobolev inequality with log-Sobolev constant polynomial
 in $b$ and $q$.

\paragraph{Special case: Ising under Dobrushin's uniqueness threshold.} As a special case of the above, suppose that
\[ \nu(x) \propto \exp\left(\sum_{(i,j) \in E} J_{ij} x_i x_j + \sum_i h_i x_i\right) \]
is a probability measure on the hypercube $\{\pm 1\}^n$ parameterized by $J,h$ where $E$ is the edge set of a sparse graph of maximum degree $\Delta$. If $\sum_j |J_{ij}| < 1 - \delta$ for all
$i$, and $\sum_i |h_i| < H$, one can directly show from the definition of the model that it is marginally bounded with $b = \exp(O(1 + H))$
and satisfies Dobrushin's uniqueness condition (by applying Gershgorin's disk theorem), hence our result applies.  Note that we do not need any assumption on the sign of the interactions $J_{ij}$ or external field $h_i$.

\paragraph{Additional references.}
There are many settings outside of Dobrushin's uniqueness condition where the log-Sobolev inequality is known. For example, the case of the lattice Ising model we discussed earlier is not contained in this regime. See e.g.\ \cite{stroock1992logarithmic,chen2021optimal,bauerschmidt2019very,eldan2022spectral,bauerschmidt2022log} for a few relevant references. In particular, by the result of Chen, Liu, and Vigoda \cite{chen2021optimal}, the log-Sobolev constant can be bounded purely as a function of $b,\Delta$
and the ``spectral independence'' constant of the distribution $\nu$ --- so our assumption that the log-Sobolev constant is bounded
can be replaced by the assumption of spectral independence.

\paragraph{Acknowledgment}
F.K. was supported in part by NSF award CCF-1704417, NSF award IIS-1908774, and N. Anari’s
Sloan Research Fellowship. D.M. was supported by a Sloan Research Fellowship, NSF CCF award 2227876 and NSF CAREER award 2239160.
E.M. is partially supported by and Vannevar Bush Faculty Fellowship award ONR-N00014-20-1-2826,  ARO MURI W911NF1910217 and a Simons Investigator Award in Mathematics (622132). 

\bibliographystyle{plain}
\bibliography{ref,my,all}

\end{document}